\documentclass[11pt]{article}
\usepackage{amsthm, amsmath, amssymb, amsfonts, url, booktabs, tikz, setspace, fancyhdr, bm}
\usepackage{hyperref}
\usepackage{geometry}
\usepackage{hyperref, enumerate}
\usepackage[shortlabels]{enumitem}
\usepackage[babel]{microtype}
\usepackage[english]{babel}
\usepackage[capitalise]{cleveref}
\hypersetup{hidelinks}
\usepackage{comment}
\usepackage{bbm}
\usepackage{csquotes}
\usepackage{mathabx}
\usepackage{tikz}
\usepackage{graphicx}
\usepackage{float}
\usepackage{amsmath}

\counterwithin{figure}{section}

\newtheorem{theorem}{Theorem}[section]
\newtheorem{prop}[theorem]{Proposition}
\newtheorem{lemma}[theorem]{Lemma}
\newtheorem{cor}[theorem]{Corollary}

\newtheorem{claim}[theorem]{Claim}

\theoremstyle{definition}
\newtheorem{defn}[theorem]{Definition}
\newtheorem*{defn-non}{Definition}

\newlist{Case}{enumerate}{3}
\setlist[Case, 1]{%
    label           =   {\bfseries Case \arabic*.},
    labelindent=1em ,labelwidth=1cm, labelsep*=1em, leftmargin =!
}
\setlist[Case, 2]{%
    label           =   {\bfseries Subcase \arabic{Casei}.\arabic*.},
    labelindent=-1em ,labelwidth=1cm, labelsep*=1em, leftmargin =!
}
\setlist[Case, 3]{%
    label           =   {\bfseries Subsubcase \arabic{Casei}.\arabic{Caseii}.\arabic*.},
    labelindent=-1em ,labelwidth=1cm, labelsep*=1em, leftmargin =!
}

\newenvironment{poc}{\begin{proof}[Proof of claim]}{\end{proof}}

\usepackage{todonotes}

\newcommand{\cP}{\mathcal{P}}

\newcommand{\VC}{\operatorname{VC}}
\newcommand{\Tr}{\operatorname{Tr}}
\newcommand{\cF}{\mathcal{F}}

\newcommand{\cA}{\mathcal{A}}
\newcommand{\cB}{\mathcal{B}}
\newcommand{\cC}{\mathcal{C}}
\newcommand{\cH}{\mathcal{H}}
\newcommand{\cK}{\mathcal{K}}
\newcommand{\cR}{\mathcal{R}}
\newcommand{\cD}{\mathcal{D}}

\title{Recursive Lifting Beyond the Ahlswede--Khachatrian Construction}

\author{Xiaochen Zhao\thanks{School of Mathematical Sciences, Capital Normal University, Beijing, China. Email: 2250501013@cnu.edu.cn.}
\and
Gennian Ge\thanks{School of Mathematical Sciences, Capital Normal University, Beijing, China. Email: gnge@zju.edu.cn. Gennian Ge was supported by the National Key Research and Development Program of China under Grant 2025YFC3409900, the National Natural Science Foundation of China under Grant 12231014, and Beijing Scholars Program.}}

\date{}

\begin{document}
\maketitle
\begin{abstract}
For the Erd\H{o}s--Frankl--Pach problem on uniform set systems of bounded VC-dimension, the Ahlswede--Khachatrian/Mubayi--Zhao construction has long served as the standard lower-bound benchmark. We develop a recursive lifting method that goes beyond this benchmark in every dimension \(d\ge3\), proving that for every \(d\ge3\) and \(n\ge d+3\),
\[
M_d(n)\ge
\binom{n-1}{d}+\binom{n-4}{d-2}+M_{d-3}(n-5).
\]
The proof is elementary and proceeds through explicit trace obstructions. 
We also record a further recursive improvement in the concluding remarks.

\medskip
\noindent {{\it Key words and phrases\/}: Uniform set systems, VC-dimension, trace, recursive construction}

\smallskip
\noindent {{\it AMS subject classifications\/}: 05D05, 05C65}
\end{abstract}

\section{Introduction}
Let $X$ be a finite set.  For a family $\cF\subseteq2^X$ and a set $S\subseteq X$, the trace of $\cF$ on $S$ is
\[
        \Tr_{\cF}(S)=\{F\cap S:F\in\cF\}.
\]
The set $S$ is shattered by $\cF$ if $\Tr_{\cF}(S)=2^S$, and the VC-dimension $\VC(\cF)$ is the maximum size of a shattered set.  The Sauer--Shelah lemma states that every family $\cF\subseteq2^{[n]}$ with $\VC(\cF)\le d$ has size at most $\sum_{i=0}^d\binom ni$, and this is sharp, for instance by taking all sets of size at most $d$; see \cite{Sauer1972,Shelah1972,VC1971}.

The uniform analogue is much less rigid.  For $d\ge0$ and $n\ge d+1$, define
\[
 M_d(n)=\max\left\{|\cF|:\cF\subseteq\binom{[n]}{d+1},\ \VC(\cF)\le d\right\}.
\]
Erd\H{o}s \cite{Erdos1984} and, independently, Frankl and Pach \cite{FranklPach1984} initiated the uniform version of this problem.  Frankl and Pach proved the fundamental upper bound
\[
        M_d(n)\le \binom nd.
\]
They further conjectured that for sufficiently large $n$ the extremal family should be the star, of size $\binom{n-1}{d}$.  This was disproved by Ahlswede and Khachatrian \cite{AhlswedeKhachatrian1997}, who constructed a family of size
\begin{equation}\label{eq:AK}
        \binom{n-1}{d}+\binom{n-4}{d-2}
\end{equation}
whenever $n\ge2d+2$.  Mubayi and Zhao \cite{MubayiZhao2007} later found many non-isomorphic constructions with the same size and conjectured that \eqref{eq:AK} is best possible for all sufficiently large $n$.

Recent progress on the upper-bound side has shown that the Frankl--Pach bound is far from the final answer: Ge, Xu, Yip, Zhang and Zhao \cite{GeXuYipZhangZhao2026} proved that $\binom nd$ is never attained for every \(d\ge2\) and \(n\ge2d+2\), and Chao, Xu, Yip and Zhang \cite{ChaoXuYipZhang2025} and Yang and Yu \cite{YangYu2025} obtained asymptotic improvements for fixed $d$.  On the lower-bound side, however, the Ahlswede--Khachatrian/Mubayi--Zhao construction remained the basic general benchmark.  The case $d=2$ even supports this picture: Wang, Xu and Zhang \cite{WangXuZhang2025} proved that $M_2(n)=\binom{n-1}{2}+1$ for $n\ge7$.

The purpose of this paper is to show that the lower-bound picture changes in every dimension $d\ge3$.  Our construction is recursive.  It starts from a lifted covering-pair formulation of the Ahlswede--Khachatrian construction and enlarges the overlap by inserting an arbitrary lower-dimensional family.  This gives the following bound.

\begin{theorem}\label{thm:main}
For every $d\ge3$ and $n\ge d+3$,
\begin{equation}\label{eq:main}
        M_d(n)\ge \binom{n-1}{d}+\binom{n-4}{d-2}+M_{d-3}(n-5).
\end{equation}
\end{theorem}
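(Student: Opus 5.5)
The plan is to give an explicit construction on a ground set $[n]=R\cup Y$. Here $R=\{r_1,\dots,r_5\}$ is a set of five reserved elements and $Y=[n]\setminus R$ has size $n-5$. The family will be the disjoint union of two parts:
\begin{itemize}
\item a \emph{base family} $\cA$ of size $\binom{n-1}{d}+\binom{n-4}{d-2}$, obtained from the Ahlswede--Khachatrian/Mubayi--Zhao construction written in a ``lifted covering-pair'' form relative to $R$;
\item an \emph{inserted family} $\cB$ of size $M_{d-3}(n-5)$.
\end{itemize}

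Concretely, fix an extremal family $\cG\subseteq\binom{Y}{d-2}$ with $\VC(\cG)\le d-3$. This is possible since $d-2=(d-3)+1$, and $n-5\ge d-2$ holds because $n\ge d+3$. Set
\[
\cB=\{G\cup T_0: G\in\cG\}
\]
for a fixed triple $T_0\subseteq R$. The triple should be chosen so that no set of the form $G\cup T_0$ already lies in $\cA$.

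First I will recast the base family so that this room is visible. Take the star at $r_1$ with every $(d+1)$-set removed whose trace pattern on $R$ is ``bad''. Then add back the sets avoiding $r_1$ that contain a prescribed covering pair inside $R$. This accounts for the $\binom{n-4}{d-2}$ term: the added sets contain three fixed reserved elements together with $d-2$ elements from the remaining $n-4$. Next I will check that the sets $G\cup T_0$ are absent from $\cA$, so the three counts add up exactly. The rearrangement must be arranged so that the number of removed star sets equals the number of added non-star sets, as in the original construction. This is a routine count with binomial identities.

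The main work is to show $\VC(\cA\cup\cB)\le d$. Suppose some $(d+1)$-set $S$ is shattered. Then in particular $S$ itself is a member, and every $d$-subset $S\setminus\{x\}$ is realized as a trace. I will split into cases according to $S\cap R$ and, within each case, exhibit one explicit subset of $S$ that is not a trace. These are the ``explicit trace obstructions''.
\begin{itemize}
\item When $S$ is mostly inside $Y$, or $S\cap R$ avoids $T_0$, the sets of $\cB$ behave like $\cA$-sets. The obstruction is then the one from the original Ahlswede--Khachatrian verification, which shows $\VC(\cA)\le d$.
\item The genuinely new case is $S\supseteq T_0$ with $|S\cap Y|=d-2$, together with its neighbours where $S$ meets $T_0$ in two elements. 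Here I will show that every subset $Z$ with $T_0\subseteq Z\subseteq S$ can only be realized as a trace by members of $\cB$. For this I must arrange $\cA$ so that no member of $\cA$ contains $T_0$ and also meets $S\cap Y$ in the required way. Then the traces $\{(G\cap(S\cap Y)): G\in\cG\}$ would have to be all of $2^{S\cap Y}$. That shatters a $(d-2)$-set by $\cG$, contradicting $\VC(\cG)\le d-3$.
\end{itemize}

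I expect the main obstacle to be choosing $\cA$ and $T_0$ compatibly. The base family must keep its full size $\binom{n-1}{d}+\binom{n-4}{d-2}$, yet leave every superset of $T_0$ inside $R\cup Y$ untraced by $\cA$ on the critical patterns. Moreover, the mixed cases with $|S\cap T_0|=2$ must not be shattered by combining $\cA$-sets with $\cB$-sets. To handle this, I would first try to take $T_0=\{r_3,r_4,r_5\}$, with $r_1$ as the star centre and $\{r_2,r_3\}$ as the covering pair. I would delete from the star exactly those sets containing $r_1$ and meeting $\{r_4,r_5\}$ in a pattern incompatible with $T_0$. Then I would verify each case by tracking which of $r_1,\dots,r_5$ lie in $S$ and which subset is missing. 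The hypothesis $n\ge d+3$ is used only to guarantee that all the binomial terms and $M_{d-3}(n-5)$ are defined.
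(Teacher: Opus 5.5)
Your outline has the right overall shape: five reserved points, a base of Ahlswede--Khachatrian size, and a copy of an extremal family $\mathcal{G}\subseteq\binom{Y}{d-2}$ glued onto a reserved triple $T_0$. But you never actually specify the base family $\cA$, and the step you defer as ``the main obstacle'' is where the whole proof lies. (Your count is also inconsistent: if the removed star sets exactly balanced the added ones, you would only get $\binom{n-1}{d}$.)

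The obstacle cannot be removed by choosing $T_0$ cleverly while keeping the AK family intact. Take AK in the paper's lifted form, with reserved points $a,b,\alpha,\beta,\gamma$ (concatenation denotes union). The intersections of its members with the reserved set are:
all $abT$ with $T\subseteq\{\alpha,\beta,\gamma\}$, together with $a$, $a\beta$, $a\beta\gamma$, $a\alpha\beta$, $a\alpha\beta\gamma$, $b\alpha$, $b\gamma$, $b\alpha\beta$, $b\alpha\gamma$, $b\alpha\beta\gamma$.
So the only triples giving new sets $T_0\cup G$ are $a\alpha\gamma$, $b\beta\gamma$ and $\alpha\beta\gamma$, and each one fails:
for $T_0=a\alpha\gamma$, the AK member $b\alpha\gamma\cup G$ has, for large $n$, exactly one missing trace, namely $\alpha\gamma\cup G$. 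The only reserved pattern meeting $b\alpha\gamma$ in $\alpha\gamma$ is $a\alpha\beta\gamma$, which leaves room for only $d-3$ further points. The inserted set $a\alpha\gamma\cup G$ realizes this trace, so $b\alpha\gamma\cup G$ becomes shattered.
Likewise, for $T_0=b\beta\gamma$ or $T_0=\alpha\beta\gamma$, the inserted set realizes the unique missing trace $\beta\gamma\cup G$ of the AK member $a\beta\gamma\cup G$.

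What is missing is a compensating, count-preserving modification of the base. The paper takes $T_0=a\alpha\gamma$ and simultaneously replaces $a\beta\gamma\cup Q$ by $b\beta\gamma\cup Q$ for every $Q\in\cH$. In covering-pair language, $\cD$ moves from the $a$-side to the $b$-side while $\cC_1$ enters the overlap. After this switch, $\gamma\cup Q$ is a missing trace on $b\alpha\gamma\cup Q$, and the new members $b\beta\gamma\cup Q$ miss $\beta\gamma\cup Q$.

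Your case analysis also misplaces where the new traces land, and hence where $\VC(\mathcal{G})\le d-3$ is used. In the paper's family the inserted sets are protected trivially: every member meets $a\alpha\gamma$, so $\emptyset$ is missing on $a\alpha\gamma\cup Q$, and no property of $\cH$ is needed there. The VC bound is needed instead for the AK members $a\alpha\beta\cup R$ and $b\alpha\beta\cup R$, where $R$ is an \emph{arbitrary} $(d-2)$-set of non-reserved points.
In AK, every trace $\beta\cup J$ (resp.\ $\alpha\cup J$) with $J\subseteq R$ is missing on these members.
After the modification, $b\beta\gamma\cup Q$ (resp.\ $a\alpha\gamma\cup Q$) realizes all such traces with $J\in\Tr_{\cH}(R)$.
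So an obstruction survives only because $\cH$ does not shatter $R$.

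Note that $b\alpha\beta\cup R$ meets $T_0$ in the single point $\alpha$. Your case split (avoid $T_0$, contain it, meet it in two points) does not list this case, yet the member's AK obstruction is destroyed. The paper keeps this bookkeeping manageable through the two-cover lifting lemma, which reduces everything to checking that members of the overlap $\cC_0\cup\cC_1$ are shattered by neither side.
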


Since a star gives $M_{d-3}(n-5)\ge\binom{n-6}{d-3}$, we immediately obtain a clean explicit consequence.

\begin{cor}\label{cor:explicit}
For every $d\ge3$ and $n\ge d+3$,
\[
        M_d(n)\ge \binom{n-1}{d}+\binom{n-4}{d-2}+\binom{n-6}{d-3}.
\]
In particular, in the usual range \(n\ge 2d+2\) where the Ahlswede--Khachatrian/Mubayi--Zhao construction gives \eqref{eq:AK}, this value is not optimal for any \(d\ge3\).
\end{cor}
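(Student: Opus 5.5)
The plan is to derive the Corollary directly from \cref{thm:main}. The only input needed is the lower bound $M_{d-3}(n-5)\ge\binom{n-6}{d-3}$, obtained from an explicit star family, followed by a positivity check for the ``in particular'' part.

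\textbf{Step 1: the star bound.} Fix $d\ge3$ and $n\ge d+3$, and set $m=n-5$ and $k=d-3\ge0$. Then $m\ge k+1$, so $M_k(m)$ is defined. Fix an element $x\in[m]$ and let $\cF_x=\{F\in\binom{[m]}{k+1}:x\in F\}$, so that $|\cF_x|=\binom{m-1}{k}=\binom{n-6}{d-3}$. We claim $\VC(\cF_x)\le k$.

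Suppose instead that some $S$ with $|S|=k+1$ is shattered by $\cF_x$.
\begin{itemize}
\item The trace $S$ itself must be realized. Since every member of $\cF_x$ has exactly $k+1$ elements, this forces $S\in\cF_x$, and therefore $x\in S$.
\item The empty trace must also be realized. This requires some $F\in\cF_x$ with $F\cap S=\emptyset$, which is impossible because $x\in F\cap S$.
\end{itemize}
Hence no $(k+1)$-set is shattered, and no larger set can be shattered either, since shattering is inherited by subsets. This shows $M_{d-3}(n-5)\ge\binom{n-6}{d-3}$.

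The degenerate case $k=0$ (that is, $d=3$) is covered by the same argument: $\cF_x=\{\{x\}\}$ has trace $\{\{x\}\}$ on $\{x\}$, which does not contain $\emptyset$, so its VC-dimension is $0$.

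\textbf{Step 2: combine with the theorem.} Substituting the bound from Step 1 into \eqref{eq:main} gives the displayed inequality of the Corollary for all $d\ge3$ and $n\ge d+3$.

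\textbf{Step 3: non-optimality of \eqref{eq:AK}.} For $d\ge3$ and $n\ge2d+2$ we have $n\ge d+3$, so the Corollary applies. Moreover $n-6\ge 2d-4\ge d-3$, so $\binom{n-6}{d-3}\ge1$. The Corollary therefore gives
\[
M_d(n)\ \ge\ \binom{n-1}{d}+\binom{n-4}{d-2}+1,
\]
which strictly exceeds the Ahlswede--Khachatrian/Mubayi--Zhao value \eqref{eq:AK}.

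There is no real obstacle here. The only point needing care is the boundary: checking that $n-5\ge d-2$, so that $M_{d-3}(n-5)$ is defined, and that the star argument still works when $d=3$. Both checks are done above.
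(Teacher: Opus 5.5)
Your proposal is correct and matches the paper's proof. The paper also takes the star $\{H\in\binom V{d-2}:v\in H\}$, notes that it is intersecting so the empty trace is missing on every member, and concludes $\VC\le d-3$ via Lemma~\ref{lem:uniform}; your inline argument (the full trace forces $S\in\cF_x$, after which $\emptyset$ cannot be realized) simply re-derives that lemma for this case. Your explicit check that $\binom{n-6}{d-3}\ge1$ when $n\ge 2d+2$ also settles the ``in particular'' clause, which the paper leaves implicit.
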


In an earlier preprint~\cite{ZhaoGe2026v1}, the authors gave a recursive lower-bound construction beyond the Ahlswede--Khachatrian/Mubayi--Zhao benchmark.
Almost simultaneously, Tran and Xu~\cite{TranXu2026} obtained an independent construction beating the same bound \eqref{eq:AK}.  Motivated in part by this parallel construction, the present paper records a further recursive augmentation in the concluding remarks.

Throughout the paper we use the convention that $\binom Xr=\emptyset$ if $r<0$ or $r>|X|$, and $\binom mr=0$ for numerical binomial coefficients outside this range. 

\section{Recursive lifting}

We first record the elementary uniform criterion which allows us to verify VC-dimension by checking only members of the family.

\begin{lemma}\label{lem:uniform}
Let $\cF\subseteq\binom Xr$.  Then $\VC(\cF)\le r-1$ if and only if no member of $\cF$ is shattered by $\cF$.
\end{lemma}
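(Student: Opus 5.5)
Both directions of Lemma~\ref{lem:uniform} come straight from the definitions and from the fact that shattering is inherited by subsets. The key point is that, since every member of $\cF$ has size $r$, no set of size larger than $r$ can be shattered: shattering $S$ requires $S$ itself to appear as a trace $F\cap S$, forcing $S\subseteq F$ and hence $|S|\le r$. So $\VC(\cF)\le r$ automatically, and $\VC(\cF)\le r-1$ holds exactly when no $r$-set is shattered.

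For the forward direction, suppose $\VC(\cF)\le r-1$. Every member $F\in\cF$ has $|F|=r>r-1$, so $F$ cannot be shattered.

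For the converse, suppose $\VC(\cF)\ge r$, and pick a shattered set $S$ with $|S|\ge r$.
\begin{itemize}
\item Since $S$ is shattered, $S\in\Tr_{\cF}(S)$, so there is $F\in\cF$ with $F\cap S=S$, that is, $S\subseteq F$.
\item Because $|F|=r\le|S|$, this forces $S=F$.
\item Hence $F\in\cF$ is a shattered member, which is the contrapositive of what we need.
\end{itemize}
I would phrase the argument this way rather than first passing to a shattered $r$-subset, since the maximal-size observation already identifies $S$ with a member of $\cF$.

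There is no real obstacle. The only step needing care is the use of the full set $S$ as a trace to locate a member containing $S$; this is where uniformity of $\cF$ enters.
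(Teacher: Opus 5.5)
Your proof is correct and is essentially the paper's argument: the paper likewise uses the full trace $S\in\Tr_{\cF}(S)$ to find $F\in\cF$ with $S\subseteq F$ and concludes $F=S$. The only difference is that the paper first passes to a shattered $r$-set by heredity, while you skip that step because $S\subseteq F$ with $|S|\ge r=|F|$ already forces $S=F$; this is a harmless, slightly leaner simplification.
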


\begin{proof}
If some $F\in\cF$ is shattered, then $\VC(\cF)\ge |F|=r$.  Conversely, if $\cF$ shatters a set of size at least $r$, then by heredity it shatters an $r$-set $S$.  The full trace $S$ is realized by some $F\in\cF$ with $S\subseteq F$; since $|S|=|F|=r$, this forces $F=S$.  Thus a member of $\cF$ is shattered.
\end{proof}

The next definition isolates the lifting mechanism.

\begin{defn}\label{def:covering}
Let $W$ be finite and let $\cA,\cB\subseteq\binom Wd$.  We call $(\cA,\cB)$ an admissible covering pair if
\[
        \cA\cup\cB=\binom Wd
\]
and every $S\in\cA\cap\cB$ is shattered by neither $\cA$ nor $\cB$.
\end{defn}

Given two new points $a,b\notin W$, define the lift
\begin{equation}\label{eq:lift}
 L(\cA,\cB)=
 \{\{a,b\}\cup R:R\in\binom W{d-1}\}
 \cup\{\{a\}\cup A:A\in\cA\}
 \cup\{\{b\}\cup B:B\in\cB\}.
\end{equation}

\begin{lemma}[Two-cover lifting]\label{lem:lifting}
If $(\cA,\cB)$ is an admissible covering pair on an $m$-element set $W$, then
\[
        L(\cA,\cB)\subseteq\binom{W\cup\{a,b\}}{d+1},\qquad \VC(L(\cA,\cB))\le d,
\]
and
\[
        |L(\cA,\cB)|=\binom{m+1}{d}+|\cA\cap\cB|.
\]
\end{lemma}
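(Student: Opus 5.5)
The uniformity claim and the size count are routine; the real content is the VC-dimension bound. Each member of $L(\cA,\cB)$ is a $(d+1)$-subset of $W\cup\{a,b\}$. Members are one of three shapes: $\{a,b\}\cup R$ with $|R|=d-1$, or $\{a\}\cup A$ with $|A|=d$, or $\{b\}\cup B$ with $|B|=d$. The three parts are pairwise disjoint because they are distinguished by their intersection with $\{a,b\}$. Hence
\[
|L(\cA,\cB)|=\binom{m}{d-1}+|\cA|+|\cB| .
\]
Since $\cA\cup\cB=\binom Wd$, we have $|\cA|+|\cB|=\binom md+|\cA\cap\cB|$, and Pascal's rule gives $\binom{m}{d-1}+\binom md=\binom{m+1}d$.

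For $\VC(L)\le d$ I would invoke Lemma~\ref{lem:uniform} with $r=d+1$, so it suffices to show that no member $F\in L$ is shattered by $L$. I would split into three cases according to the shape of $F$, and in each case exhibit a subset $T\subseteq F$ that is not a trace.

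\textbf{Case $F=\{a,b\}\cup R$.} Take $T=F\setminus\{a,b\}=R$. A member $G$ with $G\cap F=R$ must avoid $a$ and $b$, but every member contains $a$ or $b$. So $R$ is not realized.

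\textbf{Case $F=\{a\}\cup A$ with $A\in\cA$.} Suppose $F$ is shattered. Consider $T=A$. A member $G$ with $G\cap F=A$ must avoid $a$, so $G=\{b\}\cup B$ with $B\in\cB$ and $A\subseteq B$. Since $|A|=|B|$, this forces $B=A$, so $A\in\cA\cap\cB$. By admissibility, $A$ is not shattered by $\cA$, so some $S\subseteq A$ is not a trace of $\cA$ on $A$. Now take $T=\{a\}\cup S$. A member $G$ with $G\cap F=\{a\}\cup S$ contains $a$. If $G=\{a,b\}\cup R'$, then $R'\cap A=S$. Here is the difficulty: such an $R'$ of size $d-1$ exists as soon as $|S|\le d-1$ and $m$ is large enough, so this $T$ is realized by the first part. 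I therefore need to choose $T$ differently.

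The fix is to treat the full set $T=F$ separately. The trace $F$ itself forces $G=F$, which is fine; what matters is the traces $\{a\}\cup S$ with $S\subsetneq A$, which can always be realized via $\{a,b\}\cup R'$ when $m\ge 2d-1-|S|$. So the obstruction must come from the side of $b$. Traces of the form $S\subseteq A$ (avoiding $a$) can only come from $G=\{b\}\cup B$ with $B\in\cB$ and $B\cap A=S$. Hence shattering $F$ forces $\Tr_{\cB}(A)=2^A$, i.e.\ $A$ is shattered by $\cB$. Note that this argument does not need $A\in\cA\cap\cB$ at all. If $A\notin\cB$, though, admissibility says nothing about $\cB$ shattering $A$. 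In that case I would use $T=A$ itself: realizing it needs some $B\in\cB$ with $B\supseteq A$, hence $B=A\in\cB$, a contradiction. If $A\in\cA\cap\cB$, then admissibility says $\cB$ does not shatter $A$, so some $S\subseteq A$ is not in $\Tr_{\cB}(A)$, and the trace $S$ on $F$ is not realized.

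\textbf{Case $F=\{b\}\cup B$.} This is symmetric to the previous case, with the roles of $\cA$ and $\cB$ swapped and using that $B$ is not shattered by $\cA$.

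The main obstacle is this middle step: one has to notice that the traces avoiding $a$ are controlled purely by $\cB$. That is why admissibility is stated cross-wise, and it requires a sub-split according to whether $A\in\cB$.
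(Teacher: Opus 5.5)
Your proof is correct and follows essentially the same route as the paper. It has the same Pascal-rule count and the same missing trace $R$ for members $\{a,b\}\cup R$. For $\{a\}\cup A$, it uses the same observation that traces avoiding $a$ come only from members $\{b\}\cup B$, split according to whether $A\in\cB$ (admissibility gives non-shattering by $\cB$) or $A\notin\cB$ (the full trace $A$ is missing); the abandoned first attempt via $\{a\}\cup S$ and non-shattering by $\cA$ can simply be deleted.
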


\begin{proof}
Put $\cF=L(\cA,\cB)$.  It is enough, by Lemma \ref{lem:uniform}, to show that no member of $\cF$ is shattered.  If $E=\{a,b\}\cup R$ with $R\in\binom W{d-1}$, then the trace $R$ is missing on $E$, since every member of $\cF$ contains at least one of $a,b$.  If $E=\{a\}\cup S$ with $S\in\cA$, then a trace on $E$ not containing $a$ can only come from a set $\{b\}\cup B$ with $B\in\cB$.  If $S\notin\cB$, then the full trace $S$ is absent from $\Tr_{\cB}(S)$; if $S\in\cA\cap\cB$, then $S$ is not shattered by $\cB$ by admissibility.  Thus some trace not containing $a$ is missing.  The case $E=\{b\}\cup S$ with $S\in\cB$ is symmetric.  Finally,
\[
        |\cF|=\binom m{d-1}+|\cA|+|\cB|
        =\binom m{d-1}+\binom md+|\cA\cap\cB|
        =\binom{m+1}{d}+|\cA\cap\cB|.
\]
\end{proof}

We now construct an admissible pair with a recursively enlarged overlap.  Let
\[
        W=\{\alpha,\beta,\gamma\}\cup V,
\]
and let $\cH\subseteq\binom V{d-2}$ satisfy $\VC(\cH)\le d-3$.  Define
\begin{align}
\cA_0={}&\{S\in\binom Wd:\alpha\notin S,\ \beta\in S\}
       \cup\{S\in\binom Wd:S\cap\{\alpha,\beta,\gamma\}=\emptyset\}, \label{eq:A0}\\
\cB_0={}&\{S\in\binom Wd:\alpha\in S\}
       \cup\{S\in\binom Wd:\alpha,\beta\notin S,\ \gamma\in S\}.       \label{eq:B0}
\end{align}
These two families partition $\binom Wd$.  Put
\begin{align}
\cC_0&=\{\{\alpha,\beta\}\cup R:R\in\binom{W\setminus\{\alpha,\beta\}}{d-2}\},\label{eq:C0}\\
\cC_1&=\{\{\alpha,\gamma\}\cup Q:Q\in\cH\},\label{eq:C1}\\
\cD&=\{\{\beta,\gamma\}\cup Q:Q\in\cH\},\label{eq:D}
\end{align}
and set
\begin{equation}\label{eq:AB}
        \cA=(\cA_0\setminus\cD)\cup\cC_0\cup\cC_1,
        \qquad
        \cB=\cB_0\cup\cD.
\end{equation}

\begin{lemma}\label{lem:overlap}
The pair $(\cA,\cB)$ defined in \eqref{eq:A0}--\eqref{eq:AB} is admissible, and
\[
        |\cA\cap\cB|=\binom{|W|-2}{d-2}+|\cH|.
\]
\end{lemma}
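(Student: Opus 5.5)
The plan is to read off the intersection directly from the definitions, and then to verify admissibility by exhibiting, for each $S\in\cA\cap\cB$ and each of $\cA,\cB$, one explicit trace on $S$ that is never realized.

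\textbf{Covering and counting.} First I record where each of $\cC_0,\cC_1,\cD$ sits in the partition $\binom Wd=\cA_0\sqcup\cB_0$.
\begin{itemize}
\item $\cC_0\subseteq\cB_0$ and $\cC_1\subseteq\cB_0$, since their members contain $\alpha$.
\item $\cD\subseteq\cA_0$, since its members contain $\beta$ and not $\alpha$.
\item $\cC_0\cap\cC_1=\emptyset$, because members of $\cC_1$ avoid $\beta$.
\end{itemize}
The only change to the partition is that $\cD$ is moved from $\cA$ to $\cB$, so $\cA\cup\cB=\binom Wd$. Since $\cA_0\cap\cB_0=\emptyset$ and $\cD\cap\cA=\emptyset$, the intersection is exactly $\cA\cap\cB=\cC_0\sqcup\cC_1$. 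Hence
\[
|\cA\cap\cB|=\binom{|W|-2}{d-2}+|\cH|.
\]

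\textbf{Case $S=\{\alpha,\gamma\}\cup Q$ with $Q\in\cH$.} This case is easy.
\begin{itemize}
\item In $\cA$, a member whose trace on $S$ equals $\{\gamma\}\cup Q$ must avoid $\alpha$ and contain $\gamma$. The only such sets in $\cA_0$ contain $\beta$, and hence equal $\{\beta,\gamma\}\cup Q\in\cD$, which was removed. So this trace is missing.
\item In $\cB$, every member avoiding $\alpha$ contains $\gamma$. So the empty trace is missing.
\end{itemize}

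\textbf{Case $S=\{\alpha,\beta\}\cup R$ with $R\in\binom{W\setminus\{\alpha,\beta\}}{d-2}$.} I expect this case to be the main obstacle, since it is where $\VC(\cH)\le d-3$ must be used. I split according to whether $\gamma\in R$.
\begin{itemize}
\item In $\cA$, the members containing $\alpha$ but not $\beta$ are exactly those of $\cC_1$. Their traces are $\{\alpha\}\cup((\{\gamma\}\cup Q)\cap R)$ with $Q\in\cH$.
\begin{itemize}
\item If $\gamma\in R$, every such trace contains $\gamma$, so the trace $\{\alpha\}$ is missing.
\item If $\gamma\notin R$, then $R\subseteq V$ and the traces are $\{\alpha\}\cup(Q\cap R)$. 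Realizing all of them would mean $\cH$ shatters the $(d-2)$-set $R$, contradicting $\VC(\cH)\le d-3$.
\end{itemize}
\item In $\cB$, the members containing $\beta$ but not $\alpha$ are exactly those of $\cD$. Their traces are $\{\beta\}\cup((\{\gamma\}\cup Q)\cap R)$.
\begin{itemize}
\item If $\gamma\in R$, the trace $\{\beta\}$ is missing.
\item If $\gamma\notin R$, again the traces $\{\beta\}\cup T$ for $T\subseteq R$ would force $\cH$ to shatter $R$, which is impossible.
\end{itemize}
\end{itemize}

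In every case some trace on $S$ is missing. So no member of $\cA\cap\cB$ is shattered by $\cA$ or by $\cB$, which proves admissibility.
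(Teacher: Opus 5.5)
Your proof is correct and follows essentially the same route as the paper. You identify $\cA\cap\cB=\cC_0\cup\cC_1$ from the partition $\cA_0,\cB_0$, and you exhibit the same missing traces: for $\cC_0$, $\{\alpha\}$ and $\{\beta\}$ when $\gamma\in R$, or $\{\alpha\}\cup J$ and $\{\beta\}\cup J$ when $\gamma\notin R$; for $\cC_1$, $\{\gamma\}\cup Q$ in $\cA$ and $\emptyset$ in $\cB$. Your observation that $\cC_1$ (in $\cA$) and $\cD$ (in $\cB$) are exactly the members containing precisely one of $\alpha,\beta$ is just a more explicit form of the paper's realizer argument.
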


\begin{proof}
Since $\cA_0$ and $\cB_0$ partition $\binom Wd$, while $\cC_0\cup\cC_1\subseteq\cB_0$ and $\cD\subseteq\cA_0$, we have $\cA\cup\cB=\binom Wd$ and $\cA\cap\cB=\cC_0\cup\cC_1$.  The count follows.  It remains to check that no member of the overlap is shattered by either side.

Let $S=\{\alpha,\beta\}\cup R\in\cC_0$.  If $\gamma\in R$, then $\{\alpha\}\notin\Tr_{\cA}(S)$: members of $\cA_0$ avoid $\alpha$, members of $\cC_0$ contain $\beta$, and members of $\cC_1$ contain $\gamma$.  Similarly $\{\beta\}\notin\Tr_{\cB}(S)$, because members of $\cB_0$ either contain $\alpha$ or, if they avoid both $\alpha,\beta$, contain $\gamma$, while members of $\cD$ contain both $\beta$ and $\gamma$.  If $\gamma\notin R$, then $R\subseteq V$ and $|R|=d-2$.  Since $\VC(\cH)\le d-3$, choose $J\subseteq R$ with $J\notin\Tr_{\cH}(R)$.  Then $\{\alpha\}\cup J\notin\Tr_{\cA}(S)$ and $\{\beta\}\cup J\notin\Tr_{\cB}(S)$: the only possible realizers with the right special point would lie in $\cC_1$ and $\cD$, respectively, and would require a member of $\cH$ with trace $J$ on $R$.

It remains to consider $S=\{\alpha,\gamma\}\cup Q\in\cC_1$, where $Q\in\cH$.  We have $\{\gamma\}\cup Q\notin\Tr_{\cA}(S)$.  Indeed, a member of $\cA_0$ realizing this trace would have to be $\{\beta,\gamma\}\cup Q$, which was deleted into $\cD$, while members of the second part of $\cA_0$ avoid $\gamma$ and members of $\cC_0\cup\cC_1$ contain $\alpha$.  Also $\emptyset\notin\Tr_{\cB}(S)$, since every member of $\cB_0\cup\cD$ contains at least one of $\alpha,\gamma$.  Thus every overlap member is shattered by neither family.
\end{proof}

\begin{proof}[Proof of Theorem \ref{thm:main}]
Let $V$ be a set of size $n-5$, and choose
\[
        \cH\subseteq\binom V{d-2},\qquad \VC(\cH)\le d-3,
        \qquad |\cH|=M_{d-3}(n-5).
\]
Set $W=\{\alpha,\beta,\gamma\}\cup V$, so $|W|=n-2$.  By Lemma \ref{lem:overlap}, there is an admissible covering pair $(\cA,\cB)$ on $W$ with
\[
        |\cA\cap\cB|=\binom{n-4}{d-2}+M_{d-3}(n-5).
\]
Adding two new points $a,b$ and applying Lemma \ref{lem:lifting} gives a $(d+1)$-uniform family on $n$ points with VC-dimension at most $d$ and size
\[
        \binom{n-1}{d}+\binom{n-4}{d-2}+M_{d-3}(n-5),
\]
which proves the theorem.
\end{proof}

\begin{proof}[Proof of Corollary \ref{cor:explicit}]
Fix a point $v$ in an $(n-5)$-element set $V$ and take the star
\[
        \cH=\{H\in\binom V{d-2}:v\in H\}.
\]
It is intersecting, so the empty trace is missing on every member of $\cH$; by Lemma \ref{lem:uniform}, $\VC(\cH)\le d-3$. Since $|\cH|=\binom{n-6}{d-3}$, substitution in Theorem \ref{thm:main} gives the result.
\end{proof}

\section{Concluding remarks}\label{sec:concluding}
For the augmentation below, fix \(d\ge4\) and \(n\ge d+3\).  Let \(U\) be a set of size \(n-6\), let \(x\notin U\), and put
\(V=\{x\}\cup U,\text{ }W=\{\alpha,\beta,\gamma\}\cup V\).
We keep the notation from the proof of Theorem \ref{thm:main}; thus \(\cA_0,\cB_0,\cC_0\) are defined by \eqref{eq:A0}--\eqref{eq:C0} with this choice of \(W\).  Let \(a,b\) be the two lifting points, and use concatenation to denote union of labelled points. In the present notation, the Ahlswede--Khachatrian contribution is
\[
\begin{aligned}
\mathcal L_{\mathrm{AK}}
={}&
\left\{\{a,b\}\cup R:R\in\binom W{d-1}\right\}
\cup
\left\{\{a\}\cup S:S\in\cA_0\cup\cC_0\right\} \cup
\left\{\{b\}\cup S:S\in\cB_0\right\}.
\end{aligned}
\]
This is the lift of the covering pair \((\cA_0\cup\cC_0,\cB_0)\), whose overlap is \(\cC_0\). A direct count gives
\(|\mathcal L_{\mathrm{AK}}|=\binom{n-1}{d}+\binom{n-4}{d-2}\).

Let \(\Omega=\{\alpha,\beta,\gamma\}\).
The local profile family of \(\mathcal L_{\mathrm{AK}}\) over
\(\{a,b\}\cup \Omega\) is the following, where \(abT\) denotes
\(\{a,b\}\cup T\), and similarly for the other concatenations:
\begin{equation}\label{eq:R_AK}
\begin{aligned}
\cR_{\mathrm{AK}}
={}&
\{abT:T\subseteq \Omega\}  \\
&\cup
\{aT:T\subseteq \Omega,\ \alpha\notin T,\ \beta\in T\}
\cup
\{aT:T=\emptyset\}
\cup
\{aT:T\subseteq \Omega,\ \{\alpha,\beta\}\subseteq T\} \\
&\cup
\{bT:T\subseteq \Omega,\ \alpha\in T\}
\cup
\{bT:T\subseteq \Omega,\ \gamma\in T,\ \alpha,\beta\notin T\}.
\end{aligned}
\end{equation}
Moreover,
\[
        \mathcal L_{\mathrm{AK}}
        =
        \bigcup_{P\in\cR_{\mathrm{AK}}}
        \left\{P\cup A:A\in\binom V{d+1-|P|}\right\}.
\]
The proof of Theorem \ref{thm:main} identifies \(\mathcal L_{\mathrm{AK}}\) as the fixed part of a lifted covering pair before the \((\cC_1,\cD)\)-transfer.  The overlap in the corresponding covering pair \((\cA_0\cup\cC_0,\cB_0)\) is not terminal: through the \((\cC_1,\cD)\)-transfer, the local trace obstruction changes and the overlap enlarges from \(\cC_0\) to \(\cC_0\cup\cC_1\).  The new layer \(\cC_1\) is precisely the recursive port filled by the lower-dimensional family \(\cH\). Equivalently, the \((\cC_1,\cD)\)-transfer acts as a local switch that moves the subfamily \(\cD\) from the \(a\)-side to the \(b\)-side and creates the new recursive interface \(\cC_1\).

We record a further augmentation of the recursive
lifting in the language of local switches. The trace verification below shows that, after a higher-order count-preserving local switch, two additional recursive positions can be filled by further lower-dimensional families.

Write \(C=\{a,b,\alpha,\beta,\gamma,x\}\).  Each old profile \(P\in\cR_{\mathrm{AK}}\) splits into the two \(C\)-profiles \(P\) and \(xP\), according as the residual part avoids or contains \(x\).  Therefore the separated profile family is
\begin{equation}\label{eq:R_0}
        \cR_0=\cR_{\mathrm{AK}}\cup\{xP:P\in\cR_{\mathrm{AK}}\}.
\end{equation}
For a formal profile \(R\subseteq C\), write
\[ \operatorname{Lay}(R)=\left\{R\cup A:A\in\binom U{d+1-|R|}\right\},\]
with the convention that this layer is empty if
\(d+1-|R|<0\) or \(d+1-|R|>|U|\).  Then
\[
        \mathcal L_{\mathrm{AK}}
        =
        \bigcup_{R\in\cR_0}\operatorname{Lay}(R).
\]
 
\begin{prop}\label{prop:augmented-overlap}
With \(U,x,V,W,C\) as above, suppose that
\[
        \cH\subseteq\binom{\{x\}\cup U}{d-2},
        \qquad
        \VC(\cH)\le d-3, \quad\text{ and } \quad\cK_1,\cK_2\subseteq\binom U{d-3},
        \qquad
        \VC(\cK_i)\le d-4.
\]
Then there is a family $\cF\subseteq\binom{C\cup U}{d+1}$ with $\VC(\cF)\le d$ and
\[
        |\cF|=
        \binom{n-1}{d}+\binom{n-4}{d-2}+|\cH|+|\cK_1|+|\cK_2|.
\]
\end{prop}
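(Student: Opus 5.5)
The plan is to build \(\cF\) as an explicit union of layers \(\operatorname{Lay}(R)\) over a modified profile family, plus three recursive ports, and to verify \(\VC(\cF)\le d\) with Lemma \ref{lem:uniform}, exactly as in the proofs of Lemmas \ref{lem:lifting} and \ref{lem:overlap}. Concretely, I start from the separated profile family \(\cR_0\) of \eqref{eq:R_0}, so that \(\mathcal L_{\mathrm{AK}}=\bigcup_{R\in\cR_0}\operatorname{Lay}(R)\). I then apply the \((\cC_1,\cD)\)-transfer, which produces the \(\cH\)-port, and a second count-preserving local switch on profiles containing \(x\), which produces the two \(\cK_i\)-ports. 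The count \(\binom{n-1}{d}+\binom{n-4}{d-2}\) of \(\mathcal L_{\mathrm{AK}}\) is already known. So it suffices to check three things: the switch removes and adds layers of equal size, the three ports are pairwise disjoint, and the ports are disjoint from the switched fixed part. Then \(|\cF|=|\mathcal L_{\mathrm{AK}}|+|\cH|+|\cK_1|+|\cK_2|\).

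The first step is to locate the ports. A member of a \(\cK_i\)-port has the form \(P\cup K\) with \(K\in\cK_i\subseteq\binom U{d-3}\), so its profile \(P\subseteq C\) has size \(4\). Listing the size-\(4\) profiles \(P\subseteq C\) with \(P\cup A\notin\mathcal L_{\mathrm{AK}}\), using \eqref{eq:R_AK}, gives only \(a\alpha\gamma x\) and \(b\beta\gamma x\). These coincide with the \(x\)-containing parts of \(\cC_1\) and \(\cD\) after lifting. This shows the switch cannot be avoided: for arbitrary \(\cH\subseteq\binom{\{x\}\cup U}{d-2}\), those profiles are already used by \(\cH\).

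The switch I would try first exchanges the roles of \(\gamma\) and \(x\) on the \(x\)-containing profiles. That is, I move a block of layers \(\operatorname{Lay}(R)\) with \(x\in R\) from the \(a\)-side to the \(b\)-side, mirroring how \(\cD\) was moved, and move a block of the same total size back. The aim is that the profiles \(a\alpha x\,\cdot\) and \(b\beta x\,\cdot\) (with \(\gamma\) replaced by another special point, or \(\gamma\) present but \(x\) absent) become free interfaces. The new ports would then be \(\{P_1\cup K:K\in\cK_1\}\) and \(\{P_2\cup K:K\in\cK_2\}\) for two such size-\(4\) profiles \(P_1,P_2\). Because \(\cK_i\subseteq\binom U{d-3}\) never contains \(x\), these ports are disjoint from the \(\cH\)-port even when members of \(\cH\) contain \(x\).

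The second step is the trace verification, done profile by profile in the style of Lemma \ref{lem:overlap}. For each member \(E=P\cup A\), I exhibit a missing trace on \(E\). For fixed layers this is a trace determined by \(P\) alone, such as the trace \(A\) for \(P=ab\cdots\), or a single special point that no competing profile can realize. For a port member \(P_i\cup K\), the missing trace is \(\{p\}\cup J\), where \(J\subseteq K\) is a trace missed by \(\cK_i\) on \(K\) (this \(J\) exists since \(\VC(\cK_i)\le d-4\)), and \(p\) is a special point of \(P_i\) chosen so that only port members could realize \(\{p\}\cup J\). The same argument with \(\cH\) handles the \(\cH\)-port and the \(\cC_0\)-members with \(\gamma\notin R\), as in Lemma \ref{lem:overlap}.

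The main obstacle is the choice of switch. For every switched profile, the trace that used to be missing, typically \(\{\gamma\}\cup Q\) or \(\emptyset\) as in the \(\cC_1\) case of Lemma \ref{lem:overlap}, must still be missing even though layers containing \(x\) have moved sides. In the size-\(4\) port profiles the points \(\gamma\) and \(x\) interact, and there I expect a finite but delicate case check over pairs of profiles. I would organize this case check as a table of profiles in \(C\), listing for each one the designated missing trace and the profiles that could possibly realize it. I would adjust which \(x\)-layers are switched until every entry closes.
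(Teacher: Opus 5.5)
\emph{Verdict: there is a genuine gap.} Your proposal never constructs the family. The content of the proposition is three things: a specific count-preserving switch, a specific choice of ports, and the finite trace check. You defer all three (``I would adjust which $x$-layers are switched until every entry closes''), so nothing is proved.

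Worse, the constraints you impose rule out the construction that actually works. The paper drops the $(\cC_1,\cD)$-transfer altogether:
\begin{itemize}
\item $\cH$ is inserted only on the $a$-side, as $\{a\alpha\gamma\cup H:H\in\cH\}$ (profiles $a\alpha\gamma$ and $ax\alpha\gamma$), and the full layer $a\beta\gamma$ is kept.
\item The switch is $\{a\alpha\beta,\,b\alpha\beta,\,ab\alpha\beta\}\to\{b\beta\gamma,\,\alpha\beta\gamma,\,x\alpha\beta\gamma\}$, which adds the full layer $b\beta\gamma$. It removes only $x$-free profiles and adds profiles containing neither $a$ nor $b$. So the result is not a lift in the sense of Lemma~\ref{lem:lifting}, and it is not the ``move $x$-layers between the $a$-side and the $b$-side'' switch you describe.
\item The ports are $ab\alpha\beta$ for $\cK_1$ (freed by the switch) and $bx\beta\gamma$ for $\cK_2$. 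The latter is free precisely because $\cD$ is never moved to the $b$-side.
\end{itemize}
Two smaller errors: your list of unused size-$4$ profiles omits $x\alpha\beta\gamma$, and $bx\beta\gamma$ is occupied by $\cH$ only because you chose to keep the $\cD$-transfer.

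Your verification template also puts the hypotheses $\VC(\cK_i)\le d-4$ in the wrong place. For a port member $P_i\cup K$, any realizer of a trace $\{p\}\cup J$ has $C$-profile $Q$ with $Q\cap P_i=\{p\}\ne P_i$. Such a realizer is never a $\cK_i$-port member, so the condition on $J$ does no work.

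In the paper, port members are certified in one of two ways:
\begin{itemize}
\item by the empty trace, since every profile in $\cP$ meets $a\alpha\gamma$, $ax\alpha\gamma$ and $ab\alpha\beta$;
\item for $bx\beta\gamma\cup K$, by $\gamma\cup Y$ with $Y\notin\Tr_{\cH}(\{x\}\cup K)$.
\end{itemize}
The $\cK_i$-hypotheses instead certify fixed layers whose missing trace can be realized only through a port:
\begin{itemize}
\item On $x\alpha\beta\gamma\cup A$, the trace $\alpha\beta\cup Y$ with $Y\notin\Tr_{\cK_1}(A)$ can only come from the port $ab\alpha\beta$. This is why the switch deletes $a\alpha\beta$ and $b\alpha\beta$ and cuts the layer $ab\alpha\beta$ down to the port.
\item On $ax\alpha\beta\cup A$, the trace $x\beta\cup Y$ with $Y\notin\Tr_{\cK_2}(A)$ can only come from $bx\beta\gamma$.
\item Likewise, $\cH$ certifies $b\beta\gamma\cup A$ and $bx\alpha\beta\cup A$.
\end{itemize}
Every other fixed member $R\cup A$ gets the trace $A\cup\sigma(R)$, where $\sigma(R)\subsetneq R$ is chosen so that no $Q\in\cP$ with $|Q|\le|R|$ satisfies $Q\cap R=\sigma(R)$.

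The missing idea in your outline is this pairing: identify which existing layers lose their certificates when a port is opened, and pair each such layer with a port through a trace that only that port could realize.
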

\begin{proof}
By the setup above, \(\cR_0\) is the separated profile family of \(\mathcal L_{\mathrm{AK}}\). We perform the local switch
\(\{a\alpha\beta, b\alpha\beta, ab\alpha\beta\} \longrightarrow\{b\beta\gamma,\alpha\beta\gamma, x\alpha\beta\gamma\}\) on \(\cR_0\).
That is, define
\[
        \cR=
        \left(\cR_0
        \setminus\{a\alpha\beta,\ b\alpha\beta,\ ab\alpha\beta\}\right)
        \cup
        \{b\beta\gamma,\ \alpha\beta\gamma,\ x\alpha\beta\gamma\}.
\]
The switch replaces two profiles of size \(3\) and one profile of
size \(4\) by two profiles of size \(3\) and one profile of size \(4\).
Thus it preserves the multiset of profile sizes.
Let \(\cF_0=\bigcup_{R\in\cR}\operatorname{Lay}(R)\). 
Since
\(|\operatorname{Lay}(R)|\) depends only on \(|R|\), we have
\[
        |\cF_0|
        =
        \sum_{R\in\cR}|\operatorname{Lay}(R)|
        =
        \sum_{R\in\cR_0}|\operatorname{Lay}(R)|
        =
        |\mathcal L_{\mathrm{AK}}|
        =
        \binom{n-1}{d}+\binom{n-4}{d-2}.
\]

We next insert recursive families along four distinguished profiles:
\(\{a\alpha\gamma,ax\alpha\gamma,ab\alpha\beta,bx\beta\gamma\}\).
The first two are the two local appearances of the original \(\cH\)-insertion, according as a member of \(\cH\) avoids or contains \(x\).  The last two will carry the new lower-dimensional families \(\cK_1\) and \(\cK_2\).  The trace verification below will show that these insertions are legitimate.
Let \(\cF_H=\{a\alpha\gamma\cup H:H\in\cH\},\text{ }\cF_1=\{ab\alpha\beta\cup K:K\in\cK_1\},\text{ }\cF_2=\{bx\beta\gamma\cup K:K\in\cK_2\}\),
and define
\[
   \cF=\cF_0\cup\cF_H\cup\cF_1\cup\cF_2.
\]
We first count \(\cF\).
The four parts of $\cF$ are pairwise disjoint, since they are indexed by
distinct \(C\)-profiles and none of the recursive profiles belongs to
\(\cR\).
Thus
\[
        |\cF|=
        \binom{n-1}{d}+\binom{n-4}{d-2}
        +|\cH|+|\cK_1|+|\cK_2|.
\]

\begin{claim}\label{claim:augmented-trace}
Every member of \(\cF\) has a missing trace on itself.
\end{claim}

\begin{poc}
Put 
\begin{equation}\label{eq:P}
    \cP=\cR\cup\{a\alpha\gamma,\ ax\alpha\gamma,\ ab\alpha\beta,\ bx\beta\gamma\}.
\end{equation} 
This is the set of possible \(C\)-profiles of members of \(\cF\).  Also put
\(\mathcal E=\{b\beta\gamma,\ bx\alpha\beta,\ x\alpha\beta\gamma,\ ax\alpha\beta\}\subset\cR\).
We first consider a member \(F=R\cup A\in\cF_0\), where \(R\in\cR\) and
\(A\subseteq U\).  
\begin{itemize}
\item Suppose \(R\in\cR\setminus\mathcal E\).  Write
\(\cP_{\le s}=\{Q\in\cP:|Q|\le s\}\).
We choose a proper subset \(\sigma(R)\subsetneq R\) such that
\begin{equation}\label{eq:sigma-certificate}
        \{Q\in\cP_{\le |R|}:Q\cap R=\sigma(R)\}=\emptyset .
\end{equation}
Then \(A\cup\sigma(R)\) is missing from \(\Tr_{\cF}(F)\).  Indeed, if
\(G=Q\cup B\in\cF\) realized this trace, with \(Q=G\cap C\) and
\(B=G\cap U\), then
\(G\cap F=(Q\cap R)\cup(B\cap A)=A\cup\sigma(R)\).
Thus \(A\subseteq B\) and \(Q\cap R=\sigma(R)\).  Since
\(|Q|=d+1-|B|\le d+1-|A|=|R|\),
we have \(Q\in\cP_{\le |R|}\), contradicting \eqref{eq:sigma-certificate}.

It remains only to specify the certificates \(\sigma(R)\).  The non-empty certificates are listed in the following table, and for all profiles not listed
below we take \(\sigma(R)=\emptyset\). A direct check in the finite profile family \(\cP\) defined in
\eqref{eq:P} shows that, for every \(R\in\cR\setminus\mathcal E\), the
chosen \(\sigma(R)\) satisfies \eqref{eq:sigma-certificate}.
\[
\begin{array}{c|c@{\qquad}c|c@{\qquad}c|c}
R & \sigma(R) & R & \sigma(R) & R & \sigma(R)\\
\hline
ax & x
&
a\beta & \beta
&
b\alpha & \alpha
\\
b\gamma & \gamma
&
abx & x
&
ax\beta & x\beta
\\
a\beta\gamma & \beta
&
bx\alpha & x\alpha
&
bx\gamma & x\gamma
\\
b\alpha\gamma & \alpha
&
\alpha\beta\gamma & \alpha\beta
&
ax\beta\gamma & \beta
\\
bx\alpha\gamma & \alpha
&
b\alpha\beta\gamma & \alpha
&
bx\alpha\beta\gamma & \alpha
\end{array}
\]

\item For  \(R \in\mathcal E\).  In the table
below, choose \(Y\subseteq S\) with \(Y\notin\Tr_{\mathcal G}(S)\); such a
\(Y\) exists from the VC assumptions on \(\cH,\cK_1,\cK_2\).  The last column
gives a trace missing from \(F=R\cup A\):
\[
\begin{array}{c|c|c|c}
R & \mathcal G & S & \text{missing trace}\\
\hline
b\beta\gamma & \cH & A & \gamma\cup Y\\
bx\alpha\beta & \cH & \{x\}\cup A & \alpha\cup Y\\
x\alpha\beta\gamma & \cK_1 & A & \alpha\beta\cup Y\\
ax\alpha\beta & \cK_2 & A & x\beta\cup Y.
\end{array}
\]
For the four rows, the only profiles \(Q\in\cP\) whose intersection with
\(R\) equals the \(C\)-part of the proposed missing trace are as follows: the only profiles in \(\cP\)
meeting \(b\beta\gamma\) exactly in \(\gamma\) are
\(a\alpha\gamma\) and \(ax\alpha\gamma\), both coming from \(\cF_H\).  For
\(bx\alpha\beta\), the profiles meeting it in \(\alpha\) or \(x\alpha\) are
respectively \(a\alpha\gamma\) and \(ax\alpha\gamma\).  For
\(x\alpha\beta\gamma\), the only profile meeting it in \(\alpha\beta\) is
\(ab\alpha\beta\).  For \(ax\alpha\beta\), the only profile meeting it in
\(x\beta\) is \(bx\beta\gamma\).  Therefore, in each row, a realization would
force a member of the corresponding family \(\mathcal G\) with trace \(Y\)
on \(S\), impossible by the choice of \(Y\).
\end{itemize}

It remains to consider the recursive parts.  
A direct check in \(\cP\) shows that every \(Q\in\cP\) meets each of
\(a\alpha\gamma\), \(ax\alpha\gamma\), and \(ab\alpha\beta\).  Hence the
empty trace is missing on every member of \(\cF_H\) and every member of
\(\cF_1\). Thus we further assume \(F=bx\beta\gamma\cup K\in\cF_2\).  Put
\(S=\{x\}\cup K\), and choose \(Y\subseteq S\) with
\(Y\notin\Tr_{\cH}(S)\).  We claim that \(\gamma\cup Y\) is missing.  If
\(x\notin Y\), then the only possible realizing profile is
\(a\alpha\gamma\); if \(x\in Y\), then the only possible realizing profile is
\(ax\alpha\gamma\).  In either case the realizer would have to come from
\(\cF_H\), forcing a member of \(\cH\) with trace \(Y\) on \(S\), a
contradiction.  Thus every member of \(\cF\) has a missing trace on itself.
\end{poc}

By Claim \ref{claim:augmented-trace} and Lemma \ref{lem:uniform}, we have
\(\VC(\cF)\le d\).  Thus the four distinguished profiles above are indeed valid recursive insertion positions.  Together with the count above, this proves the proposition.
\end{proof}

\begin{cor}\label{cor:augmentation}
For every $d\ge4$ and $n\ge d+3$,
\begin{equation}\label{eq:augmented}
        M_d(n)\ge
        \binom{n-1}{d}+\binom{n-4}{d-2}
        +M_{d-3}(n-5)+2M_{d-4}(n-6).
\end{equation}
\end{cor}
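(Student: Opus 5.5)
This follows from Proposition \ref{prop:augmented-overlap} by choosing the three auxiliary families to be extremal. Fix $d\ge4$ and $n\ge d+3$, and let $U$ have size $n-6$ and $x\notin U$, so that $V=\{x\}\cup U$ has size $n-5$. Since $n-6\ge d-3$, the quantity $M_{d-4}(n-6)$ is defined (for $d=4$ it is $M_0(n-6)$, which concerns $1$-uniform families of VC-dimension $0$ on $n-6$ points). Likewise $M_{d-3}(n-5)$ is defined because $n-5\ge d-2$.

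\textbf{Choosing the families.} Take
\[
\cH\subseteq\binom{V}{d-2},\qquad \VC(\cH)\le d-3,\qquad |\cH|=M_{d-3}(n-5).
\]
This is possible because $V$ has exactly $n-5$ points, and VC-dimension and size are invariant under relabelling the ground set. Similarly take
\[
\cK_1=\cK_2\subseteq\binom{U}{d-3},\qquad \VC(\cK_i)\le d-4,\qquad |\cK_i|=M_{d-4}(n-6).
\]
Using the same extremal family twice is allowed, because the proposition places no relation between $\cK_1$ and $\cK_2$.

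\textbf{Conclusion.} Proposition \ref{prop:augmented-overlap} now gives a family $\cF\subseteq\binom{C\cup U}{d+1}$ with $\VC(\cF)\le d$. Its ground set $C\cup U$ has $|C|+|U|=6+(n-6)=n$ points. Its size is
\[
\binom{n-1}{d}+\binom{n-4}{d-2}+M_{d-3}(n-5)+2M_{d-4}(n-6).
\]
After identifying $C\cup U$ with $[n]$, this family witnesses \eqref{eq:augmented}.

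\textbf{Main obstacle.} The only delicate point is that the proposition's hypotheses match the ranges exactly. This means checking that $|U|=n-6$ and $|V|=n-5$ are the sizes used in the definitions of $M_{d-4}$ and $M_{d-3}$, and that $n\ge d+3$ keeps both parameters in their defined ranges. The substantive work, the trace verification, is already done inside the proposition.
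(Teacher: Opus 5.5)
Your proposal is correct and is exactly the paper's argument: you instantiate Proposition~\ref{prop:augmented-overlap} with an extremal $\cH$ on the $(n-5)$-set $V$ and extremal $\cK_1,\cK_2$ on the $(n-6)$-set $U$, then read off the size of the resulting $(d+1)$-uniform family on $|C\cup U|=n$ points. Your extra checks are accurate and make explicit what the paper leaves implicit: that $n\ge d+3$ keeps $M_{d-3}(n-5)$ and $M_{d-4}(n-6)$ in their defined ranges, and that taking $\cK_1=\cK_2$ is allowed.
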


\begin{proof}
Choose $\cH,\cK_1,\cK_2$ in Proposition \ref{prop:augmented-overlap} with
\(|\cH|=M_{d-3}(n-5),\text{ }|\cK_1|=|\cK_2|=M_{d-4}(n-6)\).
Then Proposition \ref{prop:augmented-overlap} gives a $(d+1)$-uniform family with VC-dimension at most $d$ and size equal to the right-hand side of \eqref{eq:augmented}.
\end{proof}

\section*{Acknowledgements}
The authors thank Tuan Tran and Zixiang Xu for making their independent and nearly simultaneous work available on arXiv.  Following the appearance of the earlier preprint~\cite{ZhaoGe2026v1}, their parallel construction partly motivates the further recursive augmentation presented in Section~\ref{sec:concluding}.

\bibliographystyle{abbrv}
\bibliography{references}
\end{document}